\newcommand{\plim}{\varprojlim}
\newcommand{\mcal}{\mathcal}
\newcommand{\mbf}{\mathbf}
\newcommand{\mbb}{\mathbb}
\newcommand{\mrm}{\mathrm}
\newcommand{\vphi}{\varphi}
\newcommand{\aet}{\mathrm{\acute{e}t}}
\newcommand{\cO}{\mathcal{O}}
\newtheorem{theorem}{Theorem}
\newtheorem{corollary}[theorem]{Corollary}
\newtheorem{lemma}[theorem]{Lemma}
\newtheorem{proposition}[theorem]{Proposition}
\theoremstyle{definition}
\newtheorem{remark}[theorem]{Remark}
\newtheorem{claim}{Claim}
\newtheorem*{question}{Question}
\newtheorem*{acknowledgments}{Acknowledgments}
\title{Bounds on torsion of CM abelian varieties 
over a $p$-adic field with values in a field of $p$-power roots} 
\author{Yoshiyasu Ozeki\footnote{
Department of Mathematics and Physics, Faculty of Science, Kanagawa University,
  2946 Tsuchiya, Hiratsuka-shi, Kanagawa 259--1293, JAPAN
\endgraf
e-mail: {\tt ozeki@kanagawa-u.ac.jp} 
\endgraf
This work is supported by JSPS KAKENHI Grant Number JP19K03433.}
}
\begin{document}
\maketitle

\begin{abstract}
Let $p$ be a prime number 
and $M$ the extension field of 
a $p$-adic field $K$ obtained by adjoining all $p$-power roots of all elements of $K$.
In this paper, 
we show that there exists a  constant $C$, 
depending only on  $K$ and an integer $g>0$, 
which satisfies the following property:
If $A_{/K}$ is a $g$-dimensional CM abelian variety,  
then the order of the torsion subgroup of $A(M)$
 is bounded by $C$.
\end{abstract}



\section{Introduction}

Let $p$ be a prime number.
Let $K$ be a number field (= a finite extension of $\mbb{Q}$) 
or a $p$-adic field (= a finite extension of $\mbb{Q}_p$).
Let $A$ be an abelian variety defined over $K$ of dimension $g$.
It follows from the Mordell-Weil theorem and the main theorem of \cite{Mat} that 
the torsion subgroup $A(K)_{\mrm{tors}}$ of $A(K)$ is finite.
The following question for $A(K)_{\mrm{tors}}$ 
is quite natural and have been studied for a long time:
\begin{question}
What can be said about the size of the order of $A(K)_{\mrm{tors}}$? 
\end{question}
\noindent
If $K$ is a number field of degree $d$ and $A$ is an elliptic curve (i.e., $g=1$), 
it is really surprising that there exists a constant $B(d)$, depending only on the degree $d$,
such that $\sharp A(K)_{\mrm{tors}}<B(d)$.
The explicit formula of  such a constant  $B(d)$ is given by 
Merel, Oesterl\'e and Parent
(cf.\ \cite{Me}, \cite{Pa}).  
The amazing point here is that the constant $B(d)$ is uniform 
in the sense that it
depends not on the number field $K$ but on the degree $d=[K:\mbb{Q}]$.
Such uniform boundedness results are not known 
for abelian varieties of dimension greater than one.
Next we consider the case where $K$ is a $p$-adic field.
As remarked by Cassels,
the "uniform boundedness theorem" for $p$-adic  base fields 
would be false (cf.\ Lemma 17.1 and p.264 of \cite{Ca}). 
For abelian varieties $A$ over $K$ with anisotropic reduction, 
Clark and Xarles \cite{CX} give an upper bound 
of the order of $A(K)_{\mrm{tors}}$ in terms of $g,p$ and some numerical invariants of 
$K$.
This includes the case in which $A$ has potentially good reduction, 
and in this case the existence of a bound can be found in some literatures
(cf.\ \cite{Si2}, \cite{Si3}).

We are interested in  the order of $A(L)_{\mrm{tors}}$ 
for certain algebraic extensions $L$ of $K$ {\it of infinite degree}.
Now we suppose that  $K$ is a $p$-adic field. 
There are not so many known $L$ so that $A(L)_{\mrm{tors}}$ is finite.
Imai \cite{Im} showed  that $A(L)_{\mrm{tors}}$  is finite 
if $A$ has potential good reduction and $L=K(\mu_{p^{\infty}})$,
where $\mu_{p^{\infty}}$ is the set  of $p$-power root of unity. 
The author \cite{Oz1} showed that Imai's finiteness result holds 
even if we replace $L=K(\mu_{p^{\infty}})$ with $L=Kk_{\pi}$, where 
$k$ is a $p$-adic field and $k_{\pi}$ is the Lubin-Tate extension
of $k$ associated with a certain uniformizer $\pi$ of $k$.
The result \cite{KT} of Kubo and  Taguchi is also interesting. 
They showed  that 
the torsion subgroup of $A(K(\sqrt[p^{\infty}]{K}))$ is finite, where
$A$ is an abelian variety over  $K$ with potential good reduction and 
$K(\sqrt[p^{\infty}]{K})$ is the extension field of $K$ obtained 
by adjoining all $p$-power roots of all elements of $K$. 
Our main theorem is motivated by the result of Kubo and Taguchi.
The goal of this paper is to show that, 
under the assumption that  $A$ has complex multiplication, 
the order of $A(K(\sqrt[p^{\infty}]{K}))_{\mrm{tors}}$ is "uniformly" bounded.

\begin{theorem}
\label{Main}
There exists a  constant $C(K,g)$, 
depending only on a $p$-adic field $K$ and an integer $g>0$, 
which satisfies the following property:
If $A$ is a $g$-dimensional abelian variety over $K$ with complex multiplication,  
then we have 
$$
\sharp A\left(K(\sqrt[p^{\infty}]{K})\right)_{\mrm{tors}}< C(K,g).
$$
\end{theorem}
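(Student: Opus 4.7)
My plan is to split $A(M)_{\mrm{tors}}$ into its prime-to-$p$ and $p$-primary components and bound each uniformly, after first reducing to the case where $A$ has good reduction and its full CM structure is defined over $K$. Since a CM abelian variety has its endomorphisms defined over a finite extension of degree bounded purely in terms of $g$, and since CM implies potential good reduction with the minimal good-reduction extension controlled by the CM type and reflex data, I may---at the cost of a factor $[K':K]^{2g}$ absorbed into the final constant---replace $K$ by an extension $K'$ of degree bounded in terms of $K$ and $g$ alone, over which $A$ has good reduction and its full CM is defined over the base.

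For the prime-to-$p$ torsion I would exploit the near-pro-$p$ structure of $\mrm{Gal}(M/K)$. Indeed, $M \supseteq K(\mu_{p^\infty})$, and $\mrm{Gal}(M/K(\mu_{p^\infty}))$ is a pro-$p$ quotient of the Kummer pairing, while $\mrm{Gal}(K(\mu_{p^\infty})/K)$ sits in $\mbb{Z}_p^\times$ with pro-$p$ part $1+p\mbb{Z}_p$ of index dividing $p-1$; consequently there is an intermediate field $M_0 \subseteq M$ with $[M_0:K] \leq p-1$ and $\mrm{Gal}(M/M_0)$ pro-$p$. Since a continuous action of a pro-$p$ group on a finite $\ell$-group with $\ell \neq p$ is trivial, we have $A(M)[\ell^\infty] = A(M_0)[\ell^\infty]$. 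Good reduction makes the $\ell$-adic Galois action unramified, so $A(M_0)[\ell^\infty]$ is governed by the Frobenius action on $T_\ell A$; the Weil bound then yields a uniform bound, and restricts the $\ell$ contributing nontrivially to a finite set depending only on $g$ and the residue field size of $K$.

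For the $p$-primary torsion---the main obstacle---I would use that, after the reduction above, CM theory decomposes $A[p^\infty]$ up to $K$-isogeny as $\bigoplus_v A[v^\infty]$, where $v$ ranges over the places above $p$ of the CM algebra $E$, and each $A[v^\infty]$ is a one-dimensional $p$-divisible group with $\cO_{E_v}$-action, hence isogenous to a Lubin--Tate formal group $\cG_\pi$ over the local subfield $F := E_v$ for some uniformizer $\pi$ of $F$. The torsion of $\cG_\pi$ generates the Lubin--Tate extension $F_\pi^{\mrm{LT}}$, so bounding the $p$-part of $A(M)_{\mrm{tors}}$ reduces to bounding, uniformly in the pair $(F,\pi)$, the intersection $F_\pi^{\mrm{LT}} \cap M$. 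The hardest step is making this intersection bound uniform: I expect the author's previous Lubin--Tate finiteness result \cite{Oz1}, together with a quantitative form of the near-linear-disjointness of Lubin--Tate and Kummer towers above their common cyclotomic subfield, to furnish such a bound. Uniformity over all CM algebras is then ensured by the fact that $E \otimes_\mbb{Q} \mbb{Q}_p$, being a $\mbb{Q}_p$-algebra of dimension $2g$, lies in finitely many isomorphism classes, so the pairs $(F,\pi)$ range over a set controlled only by $p$ and $g$.
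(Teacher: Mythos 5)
Your opening reduction (pass to an extension over which $A$ has good reduction and CM defined over the base, bounded degree) and the decomposition into prime-to-$p$ and $p$-primary torsion both agree with the paper, but each of your two prongs has a genuine gap. For the prime-to-$p$ part, the lemma you invoke --- that a continuous action of a pro-$p$ group on a finite $\ell$-group with $\ell \neq p$ is trivial --- is false: $\mbb{Z}/p$ acts nontrivially on $\mbb{Z}/\ell$ whenever $p \mid \ell - 1$, and more generally a Frobenius of order a power of $p$ in $GL_{2g}(\mbb{Z}/\ell^n)$ need not fix the same points as its $p$-th root. So $A(M)[\ell^\infty] = A(M_0)[\ell^\infty]$ does not follow and is generally wrong. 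The input you are missing is that the residue field $\mbb{F}_M$ of $M = K(\sqrt[p^\infty]{K})$ is \emph{finite} --- a nontrivial lemma of Kubo--Taguchi \cite[Lemma 2.3]{KT}. Once that is in hand, injectivity of the reduction map on prime-to-$p$ torsion together with the Weil bound over $\mbb{F}_M$ gives the bound directly, with no need for the pro-$p$ reduction to $M_0$.

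For the $p$-primary part, the CM/Lubin--Tate decomposition is a plausible framework, but the entire content of the theorem --- a bound on the $p$-power torsion fixed by $G_M$ that is \emph{uniform over all uniformizers} $\pi$ and over all local CM factors $F = E_v$ of bounded degree --- is exactly the step you concede you do not have (``I expect \ldots to furnish such a bound''). It is not a formal consequence of \cite{Oz1}, which treats a fixed Lubin--Tate extension. The paper does the substantive work differently: it decomposes $V_p(A)\otimes_{\mbb{Q}_p}\overline{\mbb{Q}}_p$ into crystalline characters $\psi_i\colon G_K \to K^{\times}$, reduces via local class field theory (Proposition~\ref{vplem}) to bounding $\min_\omega \sum_i v_p\bigl(\psi_{i,K}^\nu(p\omega)^{-1}-1\bigr)$ over $\omega \in \ker\mrm{Nr}_{K/\mbb{Q}_p}$, controls the constant-Hodge--Tate-type characters by the Weil bound on $\overline{A}$ and $\overline{A^\vee}$ via Lemma~\ref{alpha}, and for the non-constant ones uses Lemma~\ref{infinite} to select an element of $\ker\mrm{Nr}_{K/\mbb{Q}_p}$ whose images under the finitely many algebraic characters $\psi_{\mbf{h}_i}$ are all of infinite order, then takes a power to avoid the finite set $\mcal{R}'$ of twisted $q_K$-Weil numbers. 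Some argument of this kind --- choosing a ``good'' element of the norm kernel uniformly against a finite set of Weil numbers attached to $K$ and $g$ --- is what your proposal would need to supply; without it the asserted uniformity over $\pi$ is unproved.
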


The theorem above gives a  global result: 
For any integer $d>0$, 
we denote by $\mbb{Q}_{\le d}$ the composite 
of all number fields of degree $\le d$.
If we fix an embedding $\overline{\mbb{Q}}\hookrightarrow \overline{\mbb{Q}}_p$,
then $\mbb{Q}_{\le d}$ is embedded into the composite field
of all $p$-adic fields of degree $\le d$, 
which is a finite extension of $\mbb{Q}_p$. 
If we denote by  $\mbb{Q}_{\le d,p}$
the extension field of 
$\mbb{Q}_{\le d}$ obtained by adjoining 
all $p$-power roots of all elements of $\mbb{Q}_{\le d}$, 
then the following is an immediate consequence of  our main theorem. 
\begin{corollary}
There exists a  constant $C(d,g,p)$,  
depending only on positive integers $d,g$ and a prime number  $p$, 
which satisfies the following property:
If $A$ is a $g$-dimensional abelian variety over 
$\mbb{Q}_{\le d}$ with complex multiplication,  
then we have 
$$
\sharp A(\mbb{Q}_{\le d,p})_{\mrm{tors}}< C(d,g,p).
$$
\end{corollary}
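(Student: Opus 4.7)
The plan is to deduce the corollary from Theorem~\ref{Main} by a straightforward base change argument. Fix the embedding $\overline{\mbb{Q}} \hookrightarrow \overline{\mbb{Q}}_p$ indicated in the paragraph preceding the statement, and let $K_{d,p}$ denote the composite, taken inside $\overline{\mbb{Q}}_p$, of all $p$-adic subfields of degree $\le d$ over $\mbb{Q}_p$. As the paper observes, $K_{d,p}$ is a finite extension of $\mbb{Q}_p$ (a consequence of the classical finiteness, via Krasner's lemma, of the set of extensions of $\mbb{Q}_p$ of bounded degree), and it depends only on $d$ and $p$. Moreover the fixed embedding restricts to $\mbb{Q}_{\le d} \hookrightarrow K_{d,p}$.

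The first step is to check the field inclusion
$$
\mbb{Q}_{\le d, p} \ \subseteq\ K_{d,p}\bigl(\sqrt[p^{\infty}]{K_{d,p}}\bigr).
$$
Since $\mbb{Q}_{\le d} \subseteq K_{d,p}$, every element of $\mbb{Q}_{\le d}$ is an element of $K_{d,p}$, so all of its $p$-power roots lie in $K_{d,p}\bigl(\sqrt[p^{\infty}]{K_{d,p}}\bigr)$; as $\mbb{Q}_{\le d, p}$ is generated over $\mbb{Q}_{\le d}$ by such roots, the claimed inclusion follows.

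The second step is the base change. Let $A$ be a $g$-dimensional CM abelian variety over $\mbb{Q}_{\le d}$, and let $A_{K_{d,p}}$ denote its base change along $\mbb{Q}_{\le d} \hookrightarrow K_{d,p}$; this is again a $g$-dimensional CM abelian variety. The inclusion of fields from the first step induces an injection
$$
A(\mbb{Q}_{\le d, p})_{\mrm{tors}} \ \hookrightarrow\  A_{K_{d,p}}\bigl(K_{d,p}(\sqrt[p^{\infty}]{K_{d,p}})\bigr)_{\mrm{tors}}.
$$
Applying Theorem~\ref{Main} to the CM abelian variety $A_{K_{d,p}}$ over the $p$-adic field $K_{d,p}$ bounds the right-hand side by $C(K_{d,p}, g)$. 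Setting $C(d, g, p) := C(K_{d,p}, g)$ yields a constant depending only on $d$, $g$, and $p$, and the proof is complete.

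There is essentially no substantive obstacle: the corollary is a formal consequence of the main theorem, the only nontrivial ingredient being the finiteness of $K_{d,p}/\mbb{Q}_p$, which is already built into the setup.
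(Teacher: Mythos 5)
Your argument is correct and is exactly the approach the paper has in mind: the paragraph preceding the corollary sets up the embedding $\mbb{Q}_{\le d}\hookrightarrow K_{d,p}$ and the finiteness of $K_{d,p}/\mbb{Q}_p$, and then declares the corollary an "immediate consequence" of Theorem~\ref{Main}. You have simply written out the base change and the field inclusion $\mbb{Q}_{\le d,p}\subseteq K_{d,p}(\sqrt[p^{\infty}]{K_{d,p}})$ that the paper leaves implicit.
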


\if0
\begin{acknowledgments}
This work is supported by JSPS KAKENHI Grant Number JP19K03433.
\end{acknowledgments}
\fi

\vspace{5mm}
\noindent
{\bf Notation :}
Throughout this paper, a $p$-adic field means a finite extension of $\mbb{Q}_p$
in a fixed algebraic closure $\overline{\mbb{Q}}_p$ of $\mbb{Q}_p$.
If $F$ is an algebraic extension of $\mbb{Q}_p$, 
we  denote by $\cO_F$ and $\mbb{F}_{F}$ the ring of integers of $F$ and the residue field of $F$,
respectively.  
We denote by $G_F$  the absolute Galois group of $F$ 
and also denote by $\Gamma_F$ the set of $\mbb{Q}_p$-algebra 
embeddings of $F$ into $\overline{\mbb{Q}}_p$.
We put $d_F=[F:\mbb{Q}_p]$. 
For an algebraic extension $F'/F$,
we denote by $e_{F'/F}$ and $f_{F'/F}$ 
the ramification index of $F'/F$ and 
the extension degree of the residue field extension of  $F'/F$,
respectively.
We set $e_F:=e_{F/\mbb{Q}_p}$ and $f_{F}:=f_{F/\mbb{Q}_p}$, 
and  also set $q_F:=p^{f_F}$. 
If $F$ is a $p$-adic field, we denote by 
$F^{\mrm{ab}}$ and $F^{\mrm{ur}}$ the maximal abelian extension of $F$ 
and the maximal unramified extension of $F$, respectively.


\section{Proof}

\subsection{Some technical tools}
We denote by $v_p$ the $p$-adic valuation on 
a fixed algebraic closure $\overline{\mbb{Q}}_p$ of $\mbb{Q}_p$
normalized by $v_p(p)=1$. 
Let $K$ be a $p$-adic field.
For  any  continuous character 
$\chi$ of $G_{K}$, we often regard $\chi$ as a character of $\mrm{Gal}(K^{\mrm{ab}}/K)$.
We denote by $\mrm{Art}_K$  
the local Artin map $K^{\times}\to \mrm{Gal}(K^{\mrm{ab}}/K)$
with arithmetic normalization.
We set
$\chi_{K}:=\chi\circ \mrm{Art}_{K}$. 
We denote by $\widehat{K}^{\times}$ 
the profinite completion of $K^{\times}$.
Note that  the local Artin map induces a topological isomorphism
$\mrm{Art}_K\colon \widehat{K}^{\times}\overset{\sim}{\rightarrow} \mrm{Gal}(K^{\mrm{ab}}/K)$.
\if0
For a uniformizer $\pi_K$ of  $K$ ,
we denote by $\chi_{\pi_K}\colon G_K\to \cO_K^{\times}$
the Lubin-Tate character associated with $\pi_K$.
By definition, the character $\chi_{\pi_K}$ is characterlized by 
$\chi_{\pi_K,K}(\pi_K)=1$ and $\chi_{\pi_K,K}(x)=x^{-1}$ for any $x\in \cO_K^{\times}$.
For example, $\chi_{\pi_K}$ is the $p$-adic cyclotomic character  
if $K=\mbb{Q}_p$ and $\pi_K=p$. 
\fi 
\begin{proposition}
\label{vplem} 
Let $K$ and $k$ be $p$-adic fields.
We denote by $k_{\pi}$ the Lubin-Tate extension of $k$ associated with 
a uniformizer $\pi$  of $k$. 
$($If $k=\mbb{Q}_p$ and $\pi=p$, then we have $k_{\pi}=\mbb{Q}_p(\mu_{p^{\infty}})$.$)$
Let
$\chi_1,\dots , \chi_n \colon G_{K}\to \overline{\mbb{Q}}_p^{\times}$  be continuous characters.
Then we have 
\begin{align*}
& \ \mrm{Min}\left\{ \sum^n_{i=1} v_p(\chi_i(\sigma)-1) \mid \sigma\in G_{Kk_{\pi}} \right\} 
\\
\le & \   
\mrm{Min}\left\{ \sum^n_{i=1} v_p(\chi_{i,K}\circ \mrm{Nr}_{Kk/K}(\omega)-1) 
\mid \omega\in \mrm{Nr}_{Kk/k}^{-1}(\pi^{f_{Kk/k}\mbb{Z}}) \right\}.
\end{align*}
\end{proposition}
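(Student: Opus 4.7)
The plan is to exploit local class field theory to convert each $\omega$ on the right-hand side into a Galois element $\sigma_\omega \in G_{Kk_\pi}$ that achieves the same value of $\sum_i v_p(\chi_i(\cdot) - 1)$; once this is done the inequality of minima is immediate. The two ingredients are the functoriality of the Artin map with respect to norms ($\mrm{Art}_K \circ \mrm{Nr}_{L/K} = \mrm{Art}_L$ followed by restriction, for any finite extension $L/K$) and the Lubin--Tate characterization of $k_\pi$, namely that in the arithmetic normalization $\mrm{Art}_k(\pi)|_{k_\pi} = 1$, equivalently $\mrm{Gal}(k^{\mrm{ab}}/k_\pi)$ corresponds under $\mrm{Art}_k$ to the closure of $\pi^{\mbb{Z}}$.

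Fix $\omega \in \mrm{Nr}_{Kk/k}^{-1}(\pi^{f_{Kk/k}\mbb{Z}})$ and set $\tau_\omega := \mrm{Art}_{Kk}(\omega) \in \mrm{Gal}((Kk)^{\mrm{ab}}/Kk)$. Norm-functoriality of Artin gives $\tau_\omega|_{k^{\mrm{ab}}} = \mrm{Art}_k(\mrm{Nr}_{Kk/k}(\omega))$, and since $\mrm{Nr}_{Kk/k}(\omega) \in \pi^{f_{Kk/k}\mbb{Z}} \subset \pi^{\mbb{Z}}$, the Lubin--Tate property forces $\tau_\omega|_{k_\pi} = 1$; together with $\tau_\omega|_{Kk} = 1$, this yields $\tau_\omega|_{Kk_\pi} = 1$. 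Because $Kk_\pi \subset (Kk)^{\mrm{ab}}$ (as $k_\pi/k$ is abelian), the image of $G_{Kk_\pi}$ in $\mrm{Gal}((Kk)^{\mrm{ab}}/Kk)$ is exactly $\mrm{Gal}((Kk)^{\mrm{ab}}/Kk_\pi)$, which contains $\tau_\omega$. I may therefore lift $\tau_\omega$ to some $\sigma_\omega \in G_{Kk_\pi}$.

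Next I compute $\chi_i(\sigma_\omega)$. Since $\chi_i$ factors through $G_K^{\mrm{ab}}$, the value $\chi_i(\sigma_\omega)$ depends only on the image of $\sigma_\omega$ in $G_K^{\mrm{ab}}$. That image is the image of $\tau_\omega$ under the restriction map $G_{Kk}^{\mrm{ab}} \to G_K^{\mrm{ab}}$, which via Artin reciprocity intertwines with $\mrm{Nr}_{Kk/K}\colon (Kk)^\times \to K^\times$, so it equals $\mrm{Art}_K(\mrm{Nr}_{Kk/K}(\omega))$. Hence $\chi_i(\sigma_\omega) = \chi_{i,K}(\mrm{Nr}_{Kk/K}(\omega))$ for every $i$, and summing gives $\sum_i v_p(\chi_i(\sigma_\omega) - 1) = \sum_i v_p(\chi_{i,K}(\mrm{Nr}_{Kk/K}(\omega)) - 1)$. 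Since $\sigma_\omega$ is one of the elements over which the left-hand minimum is taken, the left-hand minimum is bounded above by this quantity; ranging $\omega$ over the right-hand indexing set then yields the desired inequality.

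The argument has no deep obstacle; the one thing to watch is purely bookkeeping. Three Artin maps $\mrm{Art}_K$, $\mrm{Art}_k$, $\mrm{Art}_{Kk}$ appear and it is essential to keep straight which base field each one refers to, and to apply norm-compatibility in the two different directions ($Kk/k$ for proving $\sigma_\omega \in G_{Kk_\pi}$, and $Kk/K$ for computing $\chi_i(\sigma_\omega)$). Once these compatibilities are correctly invoked in the arithmetic normalization fixed by the paper, every step reduces to a standard instance of local class field theory.
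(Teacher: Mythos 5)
Your proof is correct and follows the same route as the paper: both proceed by local class field theory, using the Lubin–Tate description $\mrm{Art}_k^{-1}(\mrm{Gal}(k^{\mrm{ab}}/k_\pi))=\pi^{\widehat{\mbb{Z}}}$ together with norm-functoriality of the Artin map in the two directions $Kk/k$ and $Kk/K$ to manufacture a $\sigma_\omega\in G_{Kk_\pi}$ with $\chi_i(\sigma_\omega)=\chi_{i,K}(\mrm{Nr}_{Kk/K}(\omega))$. The only cosmetic difference is that the paper identifies $\mrm{Art}_{Kk}^{-1}(\mrm{Gal}((Kk)^{\mrm{ab}}/Kk_\pi))$ exactly (passing through the maximal unramified subextension $M$ of $k$ in $Kk$ and the index computation $[M:k]=f_{Kk/k}$), whereas you prove only the containment actually needed for the inequality, which is slightly leaner.
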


\begin{proof}
We have a topological isomorphism
$\mrm{Art}^{-1}_k\colon  \mrm{Gal}(k^{\mrm{ab}}/k) \overset{\sim}{\rightarrow} 
\widehat{k}^{\times}$ and $\mrm{Art}^{-1}_k(\mrm{Gal}(k^{\mrm{ab}}/k^{\mrm{ur}}))=\cO_k^{\times}$.
We denote by $M$ the maximal unramified extension of $k$ contained in $Kk$.
Since the group $\mrm{Art}^{-1}_k(\mrm{Gal}(k^{\mrm{ab}}/M))$ contains $\cO_k^{\times}$
and is a subgroup of $\widehat{k}^{\times}=\pi^{\widehat{\mbb{Z}}}\times \cO_k^{\times}$
of index $[M:k]$, we see 
$\mrm{Art}^{-1}_k(\mrm{Gal}(k^{\mrm{ab}}/M))
=\pi^{[M:k]\widehat{\mbb{Z}}}\times \cO_k^{\times}$.
On the other hand,  
we have $\mrm{Art}^{-1}_k(\mrm{Gal}(k^{\mrm{ab}}/k_{\pi}))=\pi^{\widehat{\mbb{Z}}}$.
Thus we obtain $\mrm{Art}^{-1}_k(\mrm{Gal}(k^{\mrm{ab}}/Mk_{\pi}))=\pi^{[M:k]\widehat{\mbb{Z}}}$.
Now we denote by $\mrm{Res}_{Kk/k}$ the natural restriction map 
$\mrm{Gal}((Kk)^{\mrm{ab}}/Kk)\to \mrm{Gal}(k^{\mrm{ab}}/k)$.
It is not difficult to check that 
$\mrm{Res}_{Kk/k}^{-1}(\mrm{Gal}(k^{\mrm{ab}}/Mk_{\pi}))
=\mrm{Gal}((Kk)^{\mrm{ab}}/Kk_{\pi})$.
Thus it follows that the group
$\mrm{Art}^{-1}_{Kk}(\mrm{Gal}((Kk)^{\mrm{ab}}/Kk_{\pi}))$
coincides with $\mrm{Nr}_{Kk/k}^{-1}(\pi^{[M:k]\widehat{\mbb{Z}}})$.
Therefore, if we take any $\omega\in \mrm{Nr}_{Kk/k}^{-1}(\pi^{[M:k]\mbb{Z}})$, 
we have 
\begin{align*}
& \ \mrm{Min}\left\{ \sum^n_{i=1} v_p(\chi_i(\sigma)-1) \mid \sigma\in G_{Kk_{\pi}} \right\}  \\
= & \ 
\mrm{Min}\left\{ \sum^n_{i=1} v_p(\chi_i(\sigma)-1) \mid \sigma\in \mrm{Gal}((Kk)^{\mrm{ab}}/Kk_{\pi}) \right\} \\
= & \
\mrm{Min}\left\{ \sum^n_{i=1} v_p(\chi_{i, K}\circ \mrm{Nr}_{Kk/K}\circ \mrm{Art}^{-1}_{Kk}(\sigma)-1) 
\mid \sigma\in \mrm{Gal}((Kk)^{\mrm{ab}}/Kk_{\pi}) \right\}  \\
\le & \
\sum^n_{i=1} v_p(\chi_{i,K}\circ \mrm{Nr}_{Kk/K}(\omega)-1).
\end{align*}
\end{proof} 

We recall an observation of Conrad.
We denote by $\underline{K}^{\times}$ 
the Weil restriction  $\mrm{Res}_{K/\mbb{Q}_p}(\mbb{G}_m)$
and let $D^K_{\mrm{cris}}(\cdot) := (B_{\mrm{cris}}\otimes_{\mbb{Q}_p} \cdot)^{G_K}$.
\begin{proposition}[{\cite[Proposition B.4]{Co}}]
\label{LA2}
Let $K$ and $F$ be $p$-adic fields.
Let $\chi\colon G_K\to F^{\times}$ be 
a continuous character. 
We denote by $F(\chi)$ the $\mbb{Q}_p$-representation of $G_K$
underlying a $1$-dimensional $F$-vector space endowed with an $F$-linear action by $G_K$ via $\chi$,

\noindent
{\rm (1)} $\chi$ is crystalline\footnote{This means that 
the $\mbb{Q}_p$-representation $F(\chi)$ of $G_K$ is crystalline.}
 if and  only if 
there exists a {\rm(}necessarily unique{\rm )} $\mbb{Q}_p$-homomorphism 
$\chi_{\rm alg}\colon \underline{K}^{\times}\to \underline{F}^{\times}$
such that $\chi_K$ and $\chi_{\rm alg}$ {\rm (}on $\mbb{Q}_p$-points{\rm )} 
coincides on $\cO_K^{\times}\ (\subset K^{\times}=\underline{K}^{\times}(\mbb{Q}_p))$.

\noindent
{\rm (2)} Let $K_0$ be the maximal unramified subextension 
of $K/\mbb{Q}_p$. 
Assume that $\chi$ is crystalline and let $\chi_{\rm alg}$ be as in {\rm (1)}. 
{\rm (}Note that $\chi^{-1}$ is also  crystalline.{\rm )}
Then, the filtered $\vphi$-module 
$D^K_{\mrm{cris}}(F(\chi^{-1}))=(B_{\mrm{cris}}\otimes_{\mbb{Q}_p} F(\chi^{-1}))^{G_K}$
over $K$ is free of rank $1$ over $K_0\otimes_{\mbb{Q}_p} F$ 
and its $k_0$-linear endomorphism $\vphi^{f_K}$ 
is given by the action of the product 
$\chi_K(\pi_K)\cdot \chi_{\rm alg}^{-1}(\pi_K)\in F^{\times}$.  
Here, $\pi_K$ is any uniformizer of $K$.
\end{proposition}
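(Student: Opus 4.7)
The plan is to reduce to the classification of crystalline abelian characters via the decomposition $\chi_K = \chi_{\rm alg}\cdot \eta_K$ on $K^{\times}$ with $\chi_{\rm alg}$ algebraic and $\eta$ unramified. For the forward direction of (1), assume $\chi$ is crystalline. Then $D^K_{\mrm{dR}}(F(\chi))$ is a free rank-one $K\otimes_{\mbb{Q}_p} F$-module with a decreasing filtration compatible with the $F$-action. After extending scalars along a sufficiently large $E\subset \overline{\mbb{Q}}_p$ containing both a Galois closure of $K$ and $F$, the module decomposes as a product over $\tau\in \Gamma_K$ of one-dimensional pieces, each carrying an integer Hodge--Tate weight $n_\tau$. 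I would assemble these weights into an algebraic character on $\overline{\mbb{Q}}_p$-points, $(x_\tau)_\tau\mapsto \prod_\tau x_\tau^{n_\tau}$, and invoke Galois descent to produce $\chi_{\rm alg}\colon \underline{K}^{\times}\to \underline{F}^{\times}$ defined over $\mbb{Q}_p$, using that $\chi$ takes values in $F^{\times}$.

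Next I would show that $\psi:=\chi\cdot (\chi')^{-1}$ is unramified, where $\chi'$ is a ``pure'' crystalline character with algebraic part $\chi_{\rm alg}$ constructed via Lubin--Tate theory (or via Serre's formalism of locally algebraic abelian characters). Since $\psi$ is crystalline with all Hodge--Tate weights zero, its crystalline module has trivial filtration and thus corresponds to an unramified character; in particular $\psi_K$ is trivial on $\cO_K^{\times}$, so $\chi_K|_{\cO_K^{\times}}=\chi'_K|_{\cO_K^{\times}}=\chi_{\rm alg}|_{\cO_K^{\times}}$. The converse direction of (1) is easier: given $\chi_{\rm alg}$ matching $\chi_K$ on $\cO_K^{\times}$, the same $\chi'$ constructed above gives a crystalline character, and $\chi\cdot (\chi')^{-1}$ has $\chi_K$-part trivial on $\cO_K^{\times}$, hence is unramified, hence crystalline; composing back with $\chi'$ yields crystallinity of $\chi$. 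For part (2), write $\chi=\chi'\cdot \eta$ with $\eta$ unramified and $\eta_K(\pi_K)=\chi_K(\pi_K)\chi_{\rm alg}^{-1}(\pi_K)$. In the filtered $\vphi$-module $D^K_{\mrm{cris}}(F(\chi^{-1}))$, the factor $(\chi')^{-1}$ contributes only to the Hodge filtration and the $F$-linear structure, while the unramified twist $\eta^{-1}$ multiplies the $K_0\otimes_{\mbb{Q}_p} F$-linear endomorphism $\vphi^{f_K}$ by exactly the claimed eigenvalue. Independence of the choice of $\pi_K$ is automatic because $\chi_K\cdot \chi_{\rm alg}^{-1}$ is trivial on $\cO_K^{\times}$.

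The main obstacle is the forward half of (1): upgrading the Hodge--Tate weight data to a $\mbb{Q}_p$-algebraic homomorphism $\chi_{\rm alg}$ whose $\mbb{Q}_p$-points agree with $\chi_K$ on all of $\cO_K^{\times}$, not merely on an open subgroup. For a character that is only de Rham (equivalently, Hodge--Tate in the $1$-dimensional case), the weights are still visible but the identification on inertia holds only up to a finite-order twist; killing that ambiguity is precisely what the crystalline hypothesis buys, via the triviality of the filtration on the pure-weight-zero twist and the corresponding unramifiedness. Once this identification is secured, the descent of $\chi_{\rm alg}$ to $\mbb{Q}_p$, the explicit construction of $\chi'$ by Lubin--Tate characters, and the $\vphi^{f_K}$-computation in (2) all reduce to routine bookkeeping inside the formalism of filtered $\vphi$-modules together with the arithmetic normalization of $\mrm{Art}_K$.
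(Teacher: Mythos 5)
The paper itself does not prove this proposition; it is quoted verbatim from Conrad \cite[Prop.\ B.4]{Co}, so there is no in-paper argument to compare against. Your sketch reproduces the standard strategy of that reference (decompose $\chi$ into a ``pure algebraic'' piece $\chi'$ built from Lubin--Tate characters and an unramified twist $\eta$, use the fact that crystalline with all Hodge--Tate weights zero implies unramified, and descend the weight data to a $\mbb{Q}_p$-rational homomorphism of tori using the $F$-linear structure on $D_{\mrm{cris}}$), and the outline of part (1) is sound.

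However, there is a real gap in your treatment of part (2). You assert that ``the factor $(\chi')^{-1}$ contributes only to the Hodge filtration and the $F$-linear structure, while the unramified twist $\eta^{-1}$ multiplies $\vphi^{f_K}$ by exactly the claimed eigenvalue.'' That is false: the pure Lubin--Tate piece does contribute to the Frobenius. Take $K=\mbb{Q}_p$, $F=\mbb{Q}_p$, $\chi=\chi_p$ the cyclotomic character, $\pi_K=p$. Then $\chi$ is already pure, so in your decomposition $\chi'=\chi_p$ and $\eta=1$; your argument would give $\vphi$-eigenvalue $1$ on $D_{\mrm{cris}}(\mbb{Q}_p(-1))$. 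In fact the eigenvalue is $p$, which is exactly $\chi_{p,K}(p)\cdot\chi_{\rm alg}^{-1}(p)=1\cdot p$. The point is that the formula $\chi_K(\pi_K)\chi_{\rm alg}^{-1}(\pi_K)$ packages \emph{both} the unramified contribution $\eta_K(\pi_K)$ \emph{and} the $\vphi^{f_K}$-eigenvalue of the Lubin--Tate period module for $\chi'$, namely $\chi'_K(\pi_K)(\chi')_{\rm alg}^{-1}(\pi_K)$. Your multiplicativity reduction is fine, but it still requires computing the Frobenius eigenvalue on $D_{\mrm{cris}}$ of a Lubin--Tate crystalline character. That period computation (Frobenius acts on the relevant $B_{\mrm{cris}}$-period by the value of the reciprocity-normalized character at the uniformizer divided by its algebraic value there) is the genuine analytic input and cannot be waved away as ``contributing only to the filtration.'' With that computation supplied, the rest of your bookkeeping goes through.
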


We define some notations for later use. 
Assume that $K$ is a Galois extension of $\mbb{Q}_p$.
Let $\chi\colon G_K\to K^{\times}$ be a crystalline character. 
Let $\chi_{\mrm{LT}} \colon I_{ K}\to K^{\times}$ 
be the restriction to the inertia $ I_K$ of the Lubin-Tate character associated  
with any choice of uniformizer of $K$ 
(it depends on the choice of a uniformizer of $K$,
but its restriction to the inertia subgroup does not).
By definition, the character $\chi_{\mrm{LT}}$ is characterlized by 
$\chi_{\mrm{LT}}\circ \mrm{Art}_K(x)=x^{-1}$ for any $x\in \cO_K^{\times}$.
(We remark that $\chi_{\mrm{LT}}$ is the restriction to $I_K$ of the 
$p$-adic cyclotomic character  if $K=\mbb{Q}_p$.)
Then, we have 
$$
\chi = 
\prod_{\sigma \in \Gamma_K} \sigma^{-1} \circ \chi_{\mrm{LT}}^{h_{\sigma}}
$$
on the inertia $I_K$ for some (unique) integer $h_{\sigma}$. 
Equivalently,  the character
$\chi_{\rm alg}$ (appeared in Proposition \ref{LA2}) 
on $\mbb{Q}_p$-points is given by 
$$
\chi_{\rm alg}(x)= 
\prod_{\sigma \in \Gamma_K} (\sigma^{-1}x)^{-h_{\sigma}}
$$
for $x\in K^{\times}$. 
We say that $\mbf{h}=(h_{\sigma})_{\sigma\in \Gamma_K}$ 
is the {\it Hodge-Tate type of $\chi$}.
Note that $\{h_{\sigma} \mid \sigma\in \Gamma_K \}$ as a set  
is the set of Hodge-Tate weights of $K(\chi)$, 
that is, $C\otimes_{\mbb{Q}_p} K(\chi)
\simeq \oplus_{\sigma \in \Gamma_K} C(h_{\sigma})$
where $C$ is the completion of $\overline{\mbb{Q}}_p$.

For any set of integers $\mbf{h}=(h_{\sigma})_{\sigma\in \Gamma_K}$
indexed by $\Gamma_K$,
we define a continuous character 
$\psi_{\mbf{h}}\colon \cO_K^{\times}\to \cO_K^{\times}$ by 
\begin{equation}
\label{psih}
\psi_{\mbf{h}}(x)=\prod_{\sigma\in \Gamma_K} (\sigma^{-1}x)^{-h_{\sigma}}.
\end{equation}
\begin{lemma}
\label{infinite}
For $1\le i\le r$, let $\mbf{h}_i=(h_{i,\sigma})_{\sigma\in \Gamma_K}$
be a set of integers.
For each $i$, assume that
\begin{itemize}
\item[{\rm (a)}] $\sum_{\sigma\in \Gamma_K} h_{i,\sigma}$ is not zero, and 
\item[{\rm (b)}] $h_{i,\sigma}\not=h_{i,\tau}$ for some $\sigma,\tau\in \Gamma_K$.
\end{itemize}
Then, 
there exists an element 
$\omega$ of $\ker \mrm{Nr}_{K/\mbb{Q}_p}$ 
such that $\psi_{\mbf{h}_1}(\omega),\dots ,\psi_{\mbf{h}_r}(\omega)$ are of infinite orders.
\end{lemma}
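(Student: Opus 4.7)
The plan is to linearize the problem via the $p$-adic logarithm, reducing it to finding a vector in $\ker \mrm{tr}_{K/\mbb{Q}_p}$ that avoids finitely many proper $\mbb{Q}_p$-subspaces.

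First I would fix $N$ large enough that $\log\colon U_N := 1+p^N\cO_K \to p^N\cO_K$ is an isomorphism of topological groups with inverse $\exp$ (in particular $U_N$ is torsion-free). Since each $\sigma \in \Gamma_K$ is continuous and therefore commutes with $\log$, for $\omega \in U_N$ one computes
\begin{equation*}
\log \psi_{\mbf{h}_i}(\omega) \;=\; -\sum_{\sigma \in \Gamma_K} h_{i,\sigma}\, \sigma^{-1}(\log \omega) \;=:\; L_{\mbf{h}_i}(\log \omega),
\end{equation*}
where $L_{\mbf{h}_i}\colon K \to K$ is $\mbb{Q}_p$-linear. Similarly, $\mrm{Nr}_{K/\mbb{Q}_p} \circ \exp = \exp \circ \mrm{tr}_{K/\mbb{Q}_p}$, so $\log$ identifies $\ker \mrm{Nr}_{K/\mbb{Q}_p} \cap U_N$ with $\ker \mrm{tr}_{K/\mbb{Q}_p} \cap p^N\cO_K$, and $\psi_{\mbf{h}_i}(\omega)$ will have infinite order precisely when $L_{\mbf{h}_i}(\log \omega) \neq 0$ in $K$.

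The key step is to show that assumption (b) forces each $L_{\mbf{h}_i}$ to be nontrivial on $\ker \mrm{tr}_{K/\mbb{Q}_p}$. If $L_{\mbf{h}_i}$ vanished on $\ker \mrm{tr}_{K/\mbb{Q}_p}$, then $L_{\mbf{h}_i} = c\cdot \mrm{tr}_{K/\mbb{Q}_p}$ for some $c \in K$; writing $\mrm{tr}_{K/\mbb{Q}_p} = \sum_{\sigma} \sigma^{-1}$ (via the bijection $\sigma \mapsto \sigma^{-1}$ on $\Gamma_K$), Dedekind's linear independence of distinct automorphisms would force $h_{i,\sigma} = -c$ for every $\sigma$, contradicting (b). This is where I expect the one substantive input — Dedekind's theorem — to enter.

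Once that is in hand, each $V_i := \ker\bigl(L_{\mbf{h}_i}|_{\ker \mrm{tr}_{K/\mbb{Q}_p}}\bigr)$ is a proper $\mbb{Q}_p$-subspace of $\ker \mrm{tr}_{K/\mbb{Q}_p}$; since a finite-dimensional vector space over the infinite field $\mbb{Q}_p$ cannot be covered by finitely many proper subspaces, I pick $x \in \ker \mrm{tr}_{K/\mbb{Q}_p}$ outside every $V_i$, scale by a sufficient power of $p$ to ensure $x \in p^N\cO_K$, and set $\omega := \exp(x)$. Then $\omega \in \ker \mrm{Nr}_{K/\mbb{Q}_p}$ and $\log \psi_{\mbf{h}_i}(\omega) = L_{\mbf{h}_i}(x) \neq 0$ for every $i$, so each $\psi_{\mbf{h}_i}(\omega)$ has infinite order. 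Condition (a) does not appear to be needed for this particular reduction, though it is presumably used elsewhere in the paper.
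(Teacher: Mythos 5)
Your proof is correct, and it takes a genuinely different (and somewhat more self-contained) route than the paper's. The paper does linearize implicitly as well --- it fixes an isomorphism $1+p^2\cO_K\simeq\mbb{Z}_p^{\oplus d_K}$ and then reduces to a statement about $\mbb{Q}_p$-subspaces --- but at the crucial step it argues via dimensions of $p$-adic analytic groups: assumption (a) gives $\dim\ker\psi'_{\mbf{h}_i}\le d_K-1$, and if equality held then $\psi_{\mbf{h}_i}$ would take values in $\mbb{Z}_p^\times$ on an open subgroup, which by \cite[Lemma 2.4]{Oz1} would force all $h_{i,\sigma}$ to coincide, contradicting (b). Your argument replaces both the dimension bookkeeping and the citation of \cite[Lemma 2.4]{Oz1} with a single clean application of Dedekind's independence of characters to the identity $L_{\mbf{h}_i}=c\cdot\mrm{tr}_{K/\mbb{Q}_p}$, which is arguably more transparent. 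A nice byproduct, which you correctly point out, is that hypothesis (a) is not needed for the lemma itself (it is however automatically satisfied in the paper's application, where $h_{i,\sigma}\in\{0,1\}$ and not all equal forces $0<\sum_\sigma h_{i,\sigma}<d_K$). Both proofs then finish with the same elementary fact that a $\mbb{Q}_p$-vector space is not a finite union of proper subspaces; the paper isolates this as a separate lemma, while you cite it directly. The one thing worth making explicit in your write-up is that the Galois hypothesis on $K/\mbb{Q}_p$ (imposed by the paper just before the lemma) is what allows you to treat each $\sigma^{-1}$ as an automorphism of $K$ and to apply Dedekind's theorem over $K$; under that hypothesis everything you wrote goes through.
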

\begin{proof}
For any character $\chi$ on $\cO_K^{\times}$,
we denote by $\chi'$ the restriction of $\chi$ to $1+p^2\cO_K$.
To show the lemma, it suffices to show
\begin{equation}
\label{sub}
\ker \mrm{Nr}_{K/\mbb{Q}_p}'
\not \subset  \bigcup^r_{i=1} \ker \psi'_{\mbf{h}_i}.
\end{equation}
(In fact, any non-trivial element of $\mrm{Im}\ \psi'_{\mbf{h}_i}$ is of infinite order
since $\mrm{Im}\ \psi'_{\mbf{h}_i}$ is a subgroup of 
a torsion free group $1+p^2\cO_K$.)
Since $N'_{K/\mbb{Q}_p}(1+p^2\cO_K)$ 
is an open subgroup of $\mbb{Z}_p^{\times}$,
we see that the dimension\footnote{If a profinite group $G$ has an open subgroup $U$ 
which is isomorphic to $\mbb{Z}_p^{\oplus d}$, then $d$ does not depend on the choice 
of $U$ and we say that $d$ is the {\it dimension of $G$}.
For example, $\dim \mbb{Z}_p^{\oplus d}=d$.
Note that the dimension of $G$ is zero if and only if $G$ is finite.
See \cite{DDMS} 
for general theories of  dimensions of $p$-adic analytic groups.} 
of $\ker N'_{K/\mbb{Q}_p}$ is $d_K-1$.
We claim that $\dim \ker \psi_{\mbf{h}_i}<d_K-1$.\ 
By the assumption (a),
we see that $\mrm{Im}\ \psi'_{\mbf{h}_i}$ contains 
an open subgroup $H$ of $\mbb{Z}_p^{\times}$. 
Thus we have 
$\dim \ker \psi'_{\mbf{h}_i}=d_K-\dim \mrm{Im}\ \psi'_{\mbf{h}_i}
\le d_K-1$.
If we assume $\dim \ker \psi_{\mbf{h}_i}'=d_K-1$, then 
$\dim \mrm{Im}\ \psi'_{\mbf{h}_i}=1$ and thus 
$H$ is a finite index subgroup of $\mrm{Im}\ \psi'_{\mbf{h}_i}$.
It follows that 
there exists an open subgroup $U$ of $\cO_K^{\times}$
such that $\psi_{\mbf{h}_i}$ restricted to $U$ 
has values in $\mbb{Z}_p^{\times}$.
By \cite[Lemma 2.4]{Oz1}, we obtain that 
$h_{i,\sigma}=h_{i,\tau}$ for any $\sigma,\tau\in \Gamma_K$
but this contradicts the assumption (b) in the statement of the lemma. 
Thus we conclude that $\dim \ker \psi_{\mbf{h}_i}'<d_K-1$.

Now we fix an isomorphism $\iota\colon 1+p^2\cO_K\simeq \mbb{Z}_p^{\oplus d_K}$ 
of topological groups. 
We define vector subspaces $N$ and $P_i$ of $\mbb{Q}_p^{\oplus d_K}$   by 
$N:=\iota(\ker \mrm{Nr}_{K/\mbb{Q}_p}')
\otimes_{\mbb{Z}_p}\mbb{Q}_p$ 
and 
$P_i:=\iota(\ker \psi_{\mbf{h}_i}')
\otimes_{\mbb{Z}_p}\mbb{Q}_p$.
We know that 
$\dim_{\mbb{Q}_p} N=d_K-1$
and 
$\dim_{\mbb{Q}_p} P_i<d_K-1$.
Assume that \eqref{sub} does not hold, that is,
$\ker \mrm{Nr}_{K/\mbb{Q}_p}' 
\subset  \bigcup^r_{i=1} \ker \psi'_{\mbf{h}_i}$.
Then we have 
$N  \subset  \bigcup^r_{i=1} P_i$. 
This implies 
$N  = \bigcup^r_{i=1} (N\cap P_i)$.
By the lemma below, 
we find that $N  = N\cap P_i\subset P_i$ for some $i$
but this contradicts the fact that $\dim_{\mbb{Q}_p} N>\dim_{\mbb{Q}_p} P_i$.
\end{proof}

\begin{lemma}
Let $V$ be a vector space over a field $F$ of characteristic zero.
Let $W_1,\dots ,W_r$ be vector subspaces of $V$.
If $V=\bigcup^r_{i=1} W_i$, then $V=W_i$ for some $i$.
\end{lemma}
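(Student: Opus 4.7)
The plan is to prove this by induction on $r$, exploiting the fact that $F$, being of characteristic zero, is infinite (in fact the statement holds for any infinite field and fails over finite ones, e.g.\ $\mathbb{F}_q^2$ is the union of its $q+1$ one-dimensional subspaces, so the cardinality hypothesis is essential). The base case $r=1$ is trivial.

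For the inductive step, I first reduce to the situation where no $W_i$ is contained in the union of the others: if some $W_i$ lies in $\bigcup_{j\neq i} W_j$, then $V = \bigcup_{j\neq i} W_j$ and the inductive hypothesis gives $V = W_j$ for some $j$. Assuming this reduction, suppose for contradiction that $V \neq W_i$ for every $i$ (so in particular $r\ge 2$). By the reduction, one can choose $x \in W_1 \setminus \bigcup_{i\ge 2} W_i$, and since $V\neq W_1$ one can choose $y \in V \setminus W_1$.

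Now consider the infinite family $\{y + tx \mid t \in F\} \subset V$. Since each element lies in some $W_i$ and there are only $r$ subspaces, the pigeonhole principle yields distinct scalars $t,t'\in F$ and an index $i$ with $y+tx,\ y+t'x \in W_i$. Subtracting gives $(t-t')x \in W_i$, hence $x \in W_i$, which forces $i = 1$ by the choice of $x$. But then $y = (y+tx) - tx \in W_1$, contradicting the choice of $y$.

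There is no real obstacle in the argument; the only point requiring attention is to actually use the hypothesis on $F$, namely to guarantee infinitely many translates $y+tx$ so that pigeonhole produces two in a common $W_i$.
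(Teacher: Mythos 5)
Your proof is correct, and while it shares the two key ingredients with the paper's argument (induction on $r$ and a pigeonhole argument along a one-parameter family of vectors, using that $F$ is infinite to produce two parameters landing in the same $W_i$), the way you organize the induction is genuinely cleaner. The paper's inductive step proceeds by contradiction: it assumes simultaneously that $W_1 \not\subset \bigcup_{i\ge 2}W_i$ and $W_{r+1}\not\subset\bigcup_{i\le r}W_i$, picks witnesses $\mbf{x}_1$ and $\mbf{x}_{r+1}$, considers the integer translates $n\mbf{x}_1+\mbf{x}_{r+1}$, and pigeonholes to force $\mbf{x}_1$ into some $W_j$ with $2\le j\le r$; the contradiction then shows one of the two subspaces is redundant, and induction finishes. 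You instead reduce up front to the irredundant case (no $W_i$ inside the union of the others) by induction, and then show irredundance together with $V\neq W_i$ for all $i$ is impossible, using an arbitrary $y\notin W_1$ and the family $y+tx$ over all $t\in F$. Two minor payoffs of your version: first, because you parametrize by $t\in F$ rather than by integers $n$, your argument visibly works over any infinite field, whereas the paper's use of integer multiples and ``characteristic zero'' to conclude $(k-\ell)\mbf{x}_1\in W_j \Rightarrow \mbf{x}_1\in W_j$ masks that the real hypothesis is $|F|\ge r$ (or just $F$ infinite); second, your reduction-to-irredundance step replaces the paper's somewhat asymmetric double-nonredundancy assumption with a symmetric and standard normalization. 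Your aside that the statement fails over finite fields is also a correct and worthwhile observation.
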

\begin{proof}
We show by induction on $r$. 
The cases $r=1,2$ are clear.
Assume that the lemma holds for $r$ and suppose 
 $V=\bigcup^{r+1}_{i=1} W_i$.
We assume both $W_1\not \subset \bigcup^{r+1}_{i=2} W_i$
and $W_{r+1}\not \subset \bigcup^{r}_{i=1} W_i$ holds.
Then there exist elements 
$\mbf{x}_1\in  W_1\smallsetminus  \bigcup^{r+1}_{i=2} W_i$
and
$\mbf{x}_{r+1}\in  W_{r+1}\smallsetminus \bigcup^{r}_{i=1} W_i$.
It is not difficult  to check  that
we have $\lambda \mbf{x}_1+\mbf{x}_{r+1}
\notin  W_1\bigcup W_{r+1}$ for any $\lambda\in F^{\times}$.
Hence there exists an integer $2\le j_n\le r$ for each integer $n>0$
such that $n\mbf{x}_1+\mbf{x}_{r+1} \in  W_{j_n}$.
Take any integers $0<\ell<k$ so that $j_{\ell}=j_k(=:j)$. 
Then $(k-\ell)\mbf{x}_1=(k\mbf{x}_1+\mbf{x}_{r+1})-(\ell\mbf{x}_1+\mbf{x}_{r+1})\in W_j$.
Since $F$ is of characteristic zero,
we have $\mbf{x}_1\in W_j$ but this contradicts the fact that 
$\mbf{x}_1\notin  \bigcup^{r+1}_{i=2} W_i$.
Therefore, either 
$W_1 \subset \bigcup^{r+1}_{i=2} W_i$
or $W_{r+1}\subset \bigcup^{r}_{i=1} W_i$ holds.
This shows that 
$V=\bigcup^{r+1}_{i=2} W_i$ or $V=\bigcup^{r}_{i=1} W_i$
and the induction hypothesis implies $V=W_i$ for some $i$. 
\end{proof}

Finally we describe the following consequence of $p$-adic Hodge theory,
which is well-known for experts.
\begin{proposition}
\label{char}
Let $X$ be a proper smooth variety with good reduction over a $p$-adic field $K$.
Then we have 
$$
\mrm{det}(T-\vphi^{f_K}\mid D^{K}_{\mrm{cris}}(H^i_{\aet}(X_{\overline{K}},\mbb{Q}_p))
=\mrm{det}(T-\mrm{Frob}^{-1}_K\mid H^i_{\aet}(X_{\overline{K}},\mbb{Q}_{\ell}))
$$
for any prime $\ell \not =p$.  Here, 
$\mrm{Frob}_K$ stands for the arithmetic Frobenius of $K$.
\end{proposition}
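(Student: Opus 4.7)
The plan is to reduce both characteristic polynomials in the statement to invariants of the special fiber and then to invoke the Katz--Messing theorem. Choose a proper smooth model $\mathcal{X}$ of $X$ over $\cO_K$, let $X_0$ be its special fiber over the residue field $k=\mbb{F}_K$, and write $K_0$ for the maximal unramified subextension of $K/\mbb{Q}_p$, so that $K_0=W(k)[1/p]$ and $\vphi$ on $K_0$ lifts the absolute Frobenius of $k$.

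On the $\ell$-adic side ($\ell\neq p$), smooth and proper base change in \'etale cohomology provides a $G_K$-equivariant isomorphism
$$H^i_{\aet}(X_{\overline{K}},\mbb{Q}_\ell)\;\simeq\; H^i_{\aet}(X_{0,\overline{k}},\mbb{Q}_\ell),$$
under which $\mrm{Frob}_K^{-1}$ corresponds to the geometric (i.e.\ $q_K$-power) Frobenius of $X_0/k$. On the $p$-adic side, the crystalline comparison theorem of Fontaine--Messing, Faltings, and Tsuji supplies a natural $\vphi$-equivariant isomorphism of filtered $\vphi$-modules
$$D^{K}_{\mrm{cris}}(H^i_{\aet}(X_{\overline{K}},\mbb{Q}_p))\;\simeq\; H^i_{\mrm{cris}}(X_0/W(k))\otimes_{W(k)} K_0,$$
so that the $K_0$-linear operator $\vphi^{f_K}$ on the left-hand side matches the $f_K$-th iterate of the $\sigma$-semilinear absolute crystalline Frobenius, that is, the $k$-linear geometric Frobenius of $X_0$ acting on crystalline cohomology after extension of scalars to $K_0$.

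Finally, the theorem of Katz--Messing furnishes the bridge between the two sides of the special fiber: for a proper smooth variety $X_0$ over the finite field $k$, the characteristic polynomials of geometric Frobenius on $H^i_{\aet}(X_{0,\overline{k}},\mbb{Q}_\ell)$ and on $H^i_{\mrm{cris}}(X_0/W(k))\otimes_{W(k)} K_0$ coincide (and in fact lie in $\mbb{Z}[T]$ independently of $\ell\neq p$). Combining the three identifications gives the asserted equality. The main point requiring care is a careful reconciliation of Frobenius normalizations---arithmetic versus geometric Frobenius on the Galois side, and the $\sigma$-semilinear absolute $\vphi$ versus its $K_0$-linear $f_K$-th iterate on the crystalline side---but this is purely a matter of bookkeeping once conventions are fixed, and the proof then reduces to a chain of citations to standard $p$-adic Hodge theory with no further computation.
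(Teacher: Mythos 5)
Your proof is correct and follows essentially the same route as the paper: pass to the special fiber, use the crystalline comparison theorem (Faltings, Niziol, Tsuji) to identify $D^{K}_{\mrm{cris}}$ with crystalline cohomology of the special fiber, and then invoke Katz--Messing (the paper cites this via Chiarellotto--Le Stum and Nakkajima) to match the crystalline and $\ell$-adic characteristic polynomials. The only cosmetic difference is that you spell out the smooth proper base change step on the $\ell$-adic side explicitly, whereas the paper folds it into the cited references.
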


\begin{proof}
Let $Y$ be the special fiber of a proper smooth model of $X$ over the integer ring of $K$.
By the crystalline conjecture shown by Faltings \cite{Fa} (cf. \cite{Ni}, \cite{Tsu}), 
we have an isomorphism 
$D^{K}_{\mrm{cris}}(H^i_{\aet}(X_{\overline{K}},\mbb{Q}_p)) 
\simeq K_0\otimes_{W(\mbb{F}_{q_K})} H^i_{\mrm{cris}}(Y/W(\mbb{F}_{q_K}))$
of $\vphi$-modules over $K_0$.  
It follows from Corollary 1.3 of \cite{CLS} 
(cf.\ \cite[Theorem 1]{KM} and \cite[Remark 2.2.4 (4)]{Na})
that the characteristic polynomial of 
$K_0\otimes_{W(\mbb{F}_{q_K})} H^i_{\mrm{cris}}(Y/W(\mbb{F}_{q_K}))$
for the ($f_K$-iterate) Frobenius action coincides with 
$\mrm{det}(T-\mrm{Frob}^{-1}_K\mid H^i_{\aet}(X_{\overline{K}},\mbb{Q}_{\ell}))$
for any prime $\ell \not =p$. 
Thus the result follows.
\end{proof}

\subsection{Proof of the main theorem}

Let $A$ be a $g$-dimensional abelian variety over $K$ with complex multiplication.
We denote by $L$ the field obtained by adjoining to $K$ all points of $A[12]$.
It follows from \cite[Theorem 4.1]{Si} that 
endomorphisms of $A$ are defined over $L$.
By the Raynaud's criterion of semistable reduction  \cite[Proposition 4.7]{Gr},
$A$  has semi-stable reduction over $L$. 
Moreover, $A$ has good reduction over $L$ 
since $A$ has complex multiplication \cite[Section 2, Corollary 1]{ST}. 
Since the extension degree of $L$ over $K$ is at most the order of $GL_{2g}(\mathbb{Z}/12\mathbb{Z})$
and there exist only finitely many $p$-adic field of a given degree, 
we immediately reduces a proof of  Theorem \ref{Main} to show the 
following

\begin{proposition}
\label{Main'}
There exists a  constant $\hat{C}(K,g)$, 
depending only on a $p$-adic field $K$ and an integer $g>0$, 
which satisfies the following property:
Let $A$ be a $g$-dimensional abelian variety over $K$ with the properties that 
$A$ has good reduction over $K$ and 
$\mrm{End}_K(A)\otimes_{\mbb{Z}} \mbb{Q}$ is a CM field of degree $2g$. Then
we have 
$$
\sharp A\left(K(\sqrt[p^{\infty}]{K})\right)_{\mrm{tors}}< \hat{C}(K,d).
$$
\end{proposition}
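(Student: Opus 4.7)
Adjoining $p$-power roots cannot enlarge the (prime-to-$p$) residue field, so $\mbb{F}_M=\mbb{F}_K$. For $\ell\neq p$, the $\ell$-adic Tate module is unramified (N\'eron--Ogg--Shafarevich), so $G_M$ acts via the same Frobenius quotient as $G_K$; hence $A(M)[\ell^\infty]=A(K)[\ell^\infty]$, whose order divides $\det(1-\mrm{Frob}_K\mid V_\ell A)$. By Proposition~\ref{char} and the Weil conjectures this is uniformly bounded by $(1+q_K^{1/2})^{2g}$ independent of $\ell$, giving the prime-to-$p$ part of the bound.

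\textbf{$p$-power torsion, CM setup.}
After enlarging $K$ within a bounded degree, assume $K/\mbb{Q}_p$ is Galois and contains the Galois closure of $F:=\mrm{End}_K(A)\otimes\mbb{Q}$. Decompose $V_pA\cong\bigoplus_{v|p}F_v$ with $G_K$ acting by crystalline characters $\chi_v\colon G_K\to F_v^\times\hookrightarrow K^\times$ of Hodge-Tate type $\mbf{h}_v\in\{0,1\}^{\Gamma_K}$. The $v$-summand of $A(M)[p^\infty]$ has order $q_v^{\min_{\sigma\in G_M} v_v(\chi_v(\sigma)-1)}$, so it suffices to exhibit a single $\sigma_0\in G_M$ making $\sum_v [F_v:\mbb{Q}_p]\cdot v_p(\chi_v(\sigma_0)-1)$ uniformly bounded in $A$.

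\textbf{Class-field-theoretic construction.}
Since each $\chi_v$ factors through $G_K^{\mrm{ab}}$, I take $\sigma_0$ to be a lift to $G_M$ of $\mrm{Art}_K(\omega)$ for a suitable $\omega\in\cO_K^\times$; the lift exists iff $\mrm{Art}_K(\omega)$ fixes $M\cap K^{\mrm{ab}}$. One shows $M\cap K^{\mrm{ab}}=K(\mu_{p^\infty})\cdot K(K^{\times 1/p^{n_0}})$ with $n_0=v_p(|\mu_{p^\infty}(K)|)$, by combining the cyclotomic piece (abelian in $M$) with the abelian Kummer piece (abelian because $\mu_{p^{n_0}}\subset K$); via local class field theory and non-degeneracy of the $p^{n_0}$-th Hilbert symbol, the condition on $\omega$ becomes $\omega\in U:=(\cO_K^\times)^{p^{n_0}}\cap\ker\mrm{Nr}_{K/\mbb{Q}_p}$, an open subgroup of $\cO_K^\times$ of $\mbb{Z}_p$-dimension $d_K-1$. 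For such $\omega$, Proposition~\ref{LA2} evaluates $\chi_v(\sigma_0)=\chi_{v,K}(\omega)=\psi_{\mbf{h}_v}(\omega)$.

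\textbf{Uniformity and main obstacle.}
The Hodge-Tate types $(\mbf{h}_v)_v$ of $g$-dimensional CM abelian varieties over $K$ form a finite set depending only on $(K,g)$. In the generic case each $\mbf{h}_v$ satisfies hypotheses (a) and (b) of Lemma~\ref{infinite}, whose dimension argument applies verbatim to $U$ (same $\mbb{Z}_p$-dimension $d_K-1$), producing a uniform $\omega\in U$ with $\psi_{\mbf{h}_v}(\omega)$ of infinite order for every $v$ and every type, with $\sum_v v_p(\psi_{\mbf{h}_v}(\omega)-1)$ bounded in $(K,g)$ alone. Degenerate cases---(a) fails, with $\chi_v$ unramified, or (b) fails, with $\chi_v|_{I_K}$ a power of the cyclotomic character---are handled directly via Proposition~\ref{char}: the unramified twist $\chi_v\chi_{\mrm{cyc}}^{-h}$ has Frobenius eigenvalue a Weil number of absolute value $q_K^{1/2-h}\neq 1$, so evaluating on a Frobenius lift in $G_M$ gives a uniformly bounded $v_p$. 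The main obstacle is the explicit description of $M\cap K^{\mrm{ab}}$ via cyclotomic plus Kummer pieces and the propagation of Lemma~\ref{infinite}'s dimension count to the constrained subgroup $U$; once these are settled, Conrad's formula and Lemma~\ref{infinite} deliver the uniform bound.
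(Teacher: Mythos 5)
Your overall strategy shadows the paper's (reduce to a CM abelian crystalline representation, translate $p$-power torsion into a bound on $\min_\sigma v_p(\chi(\sigma)-1)$ for the characters $\chi$ occurring, split into ``generic'' and ``degenerate'' Hodge--Tate types, and settle the degenerate types via Weil bounds and the generic ones by a dimension count \`a la Lemma~\ref{infinite}), but it diverges in the one place that carries most of the weight, and there your argument has a real gap.

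\textbf{Minor point.} Your assertion that $\mbb{F}_M=\mbb{F}_K$ is false in general: adjoining $p$-power roots of units of $K$ can and does produce nontrivial unramified extensions. What Kubo--Taguchi prove (and what the paper uses) is only that $\mbb{F}_M/\mbb{F}_K$ is \emph{finite}. Your prime-to-$p$ bound is easily repaired by replacing $q_K$ with $q_M:=\sharp\mbb{F}_M$, which still depends only on $K$.

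\textbf{Main gap.} The heart of your $p$-part argument is the unproven identity
$M\cap K^{\mrm{ab}}=K(\mu_{p^\infty})\cdot K\bigl(K^{\times 1/p^{n_0}}\bigr)$,
which you use to pin down the subgroup $U\subset\cO_K^\times$ and then appeal to the dimension count. This identity is exactly the kind of computation the paper is engineered to avoid, and it is not routine. First, the identity, if true, needs an argument about the full Kummer module $\mrm{Gal}(M/K(\mu_{p^\infty}))$ and the closure of the commutator $\overline{[G,G]}$; the naive $\overline{[G,G]}=p^{n_0}\,\mrm{Gal}(M/K(\mu_{p^\infty}))$ requires knowing that $\chi_p(G_K)=1+p^{n_0}\mbb{Z}_p$ exactly, which fails when $\mu_p\not\subset K$ (there $n_0=0$ but $\nu$ can be $p^m$ for $m\ge 1$) and is delicate when $p=2$. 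Second, and more seriously, the unramified part $M^{\mrm{ur}}$ of $M$ is abelian over $K$, hence lies inside $M\cap K^{\mrm{ab}}$, and it is not obvious that $M^{\mrm{ur}}$ is contained in your candidate field; if it is strictly larger, then the set of $\omega\in\cO_K^\times$ with $\mrm{Art}_K(\omega)$ fixing $M\cap K^{\mrm{ab}}$ is \emph{smaller} than your $U$, and the dimension count that makes Lemma~\ref{infinite} applicable could break. Note the logical direction: you need the inclusion $M\cap K^{\mrm{ab}}\subset K(\mu_{p^\infty})\cdot K(K^{\times 1/p^{n_0}})$, i.e.\ that the abelian part is \emph{no bigger} than claimed, which is precisely the hard containment. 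The paper sidesteps all of this by never identifying $M_{\mrm{ab}}$: it only proves that $\mrm{Gal}(M_{\mrm{ab}}/K(\mu_{p^\infty}))$ has exponent dividing a fixed $p$-power $\nu$ (via the conjugation formula $\sigma\tau\sigma^{-1}=\tau^{\chi_p(\sigma)}$), so that $\sigma\in G_{K(\mu_{p^\infty})}$ implies $\sigma^\nu\in G_{M_{\mrm{ab}}}$; it then evaluates at $\mrm{Art}_K(p\omega)$ for $\omega\in\ker\mrm{Nr}_{K/\mbb{Q}_p}$, handling generic and degenerate Hodge--Tate types at once. If you want to keep your structure, the cheapest fix is to import the paper's exponent bound: it immediately shows the inertial part of $\mrm{Art}_K^{-1}\bigl(\mrm{Gal}(K^{\mrm{ab}}/M\cap K^{\mrm{ab}})\bigr)$ contains $(\ker\mrm{Nr}_{K/\mbb{Q}_p})^{\nu}$, hence has $\mbb{Z}_p$-dimension $d_K-1$, and you never need to name $M\cap K^{\mrm{ab}}$ explicitly.

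\textbf{A compensating strength worth noting.} Your decomposition by places $v\mid p$ of the CM field, with a possibly different test element per $v$, is a genuine (if modest) simplification over the paper's single-element determinant computation: because you evaluate at a \emph{unit} $\omega$ rather than at $p\omega$, the quantity $\chi_v(\sigma_0)=\psi_{\mbf{h}_v}(\omega)$ depends only on $(K,g,\mbf{h}_v,\omega)$ and not on $A$, so you avoid the paper's auxiliary finite set $\mcal{R}'$ of Weil numbers and the ``take a power of $\omega$ to dodge $\mcal{R}'$'' step. The price is exactly that unramified constituents become invisible to a unit test element, which you correctly compensate by using a Frobenius lift for those $v$. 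Be aware, however, that this per-place argument implicitly assumes a $G_K$-stable decomposition of the integral Tate module $T_pA$ by the $v\mid p$, which requires $\cO_F$ to be maximal at $p$ (or a comparable lattice argument); the paper's Jordan--H\"older route avoids this as well.
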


\begin{proof}
Since there exist  only finitely many $p$-adic field of a given degree, 
replacing $K$ by a finite extension, 
we may assume the following hypothesis:
\begin{itemize}
\item[(H)] $K$ is a Galois extension of $\mbb{Q}_p$ and $K$ contains all $p$-adic fields 
of degree $\le 2g$.
\end{itemize}
In the rest of  the proof, 
we set $M:=K(\sqrt[p^{\infty}]{K})$.
Let $A$ be a $g$-dimensional abelian variety over $K$ with the properties that 
$A$ has good reduction over $K$ and 
$F:=\mrm{End}_K(A)\otimes_{\mbb{Z}} \mbb{Q}$ is a CM field of degree $2g$.
Let $T=T_p(A):=\plim_{n} A[p^n]$ be the $p$-adic Tate module of $A$
and $V=V_p(A):=T_p(A)\otimes_{\mbb{Z}_p} \mbb{Q}_p$. 
Then $V$ is a free $F_p:=F\otimes \mbb{Q}_p$-module of rank 
one and the representation 
$\rho\colon G_K\to GL_{\mbb{Z}_p}(T)(\subset GL_{\mbb{Q}_p}(V))$
defined by the $G_K$-action on $T$ has values in 
$GL_{F_p}(V)=F_p^{\times}$.
In particular, $\rho$ is an abelian representation.
The representation
$V$ is a Hodge-Tate representation
with Hodge-Tate weights $0$ (multiplicity $g$) and $1$  (multiplicity $g$).
Moreover, 
$V$ is crystalline
since $A$ has good reduction over $K$. 
Fix an  isomorphism
$\iota \colon T\overset{\sim}{\rightarrow} \mbb{Z}_p^{\oplus 2g}$
of $\mbb{Z}_p$-modules. 
We have an isomorphism
$\hat{\iota}\colon GL_{\mbb{Z}_p}(T)\simeq GL_{2g}(\mbb{Z}_p)$ 
relative to $\iota$. 
We abuse notation by writing $\rho$
for the composite map 
$G_K\to GL_{\mbb{Z}_p}(T)
\simeq GL_{2g}(\mbb{Z}_p)$
of $\rho$ and $\hat{\iota}$.
Now let $P\in T$ and denote by $\bar{P}$ the image of $P$ in $T/p^nT$.
By definition, we have 
 $\iota(\sigma P)=\rho(\sigma)\iota(P)$
for $\sigma\in G_K$. 
Suppose that $\bar{P}\in(T/p^nT)^{G_M}$.
This implies  $\sigma P-P \in p^nT$ for any $\sigma\in {G_M}$. 
This is equivalent to say that 
 $(\rho(\sigma)-E)\iota(P) \in p^n\mbb{Z}_p^{\oplus 2g}$, and this in particular 
implies 
$\det (\rho(\sigma)-E)\iota(P) \in p^n\mbb{Z}_p^{\oplus 2g}$
for any $\sigma\in {G_M}$.
If we denote by $M_{\mrm{ab}}$ 
the maximal abelian extension of $K$ contained in $M$,
it holds that $\rho(G_M)=\rho(G_{M_{\mrm{{ab}}}})$
since $\rho(G_K)$ is abelian.
Thus we have 
\begin{equation}
\label{ram1}
\det (\rho(\sigma)-E)\iota(P) \in p^n\mbb{Z}_p^{\oplus 2g}
\quad \mbox{for any $\sigma\in {G_{M_{\mrm{ab}}}}$}.
\end{equation}
On the other hand, 
we set $G:=\mrm{Gal}(M/K)$ and $H:=\mrm{Gal}(M/K(\mu_{p^{\infty}}))$. 
Let $\chi_p\colon G_K\to \mbb{Z}_p^{\times}$ be the $p$-adic cyclotomic character.
Since we have $\sigma \tau \sigma^{-1}=\tau^{\chi_p(\sigma)}$ for any $\sigma\in G$
and $\tau\in H$,
we see $(G,G)\supset (G,H)\supset H^{\chi_p(\sigma)-1}$.
Hence we have a natural surjection
\begin{equation}
\label{H}
H/H^{\chi_p(\sigma)-1}
\twoheadrightarrow 
H/\overline{(G,G)}
=\mrm{Gal}(M_{\mrm{ab}}/K(\mu_{p^{\infty}}))
\quad \mbox{for any $\sigma\in G$}.
\end{equation}
Let $\nu$ be the smallest $p$-power integer with the properties that  $\nu>1$ and $\chi_p(G_K)\supset 1+\nu \mbb{Z}_p$.
Then \eqref{H} gives the fact  that 
$\mrm{Gal}(M_{\mrm{ab}}/K(\mu_{p^{\infty}}))$
is of exponent $\nu$, that is, 
$\sigma\in G_{K(\mu_{p^{\infty}})}$ implies 
$\sigma^{\nu}\in G_{M_{\mrm{ab}}}$.
Hence it follows from \eqref{ram1} 
that, for any  point 
$P\in T$ such that its image $\bar{P}$ in $T/p^nT$ is 
fixed by $G_M$, we have 
\begin{equation}
\label{ram1'}
\det (\rho(\sigma)^{\nu}-E)\iota(P) \in p^n\mbb{Z}_p^{\oplus 2g}
\quad \mbox{for any $\sigma\in {G_{K(\mu_{p^{\infty}})}}$}.
\end{equation}

\begin{claim}
\label{claim1}
There exists a constant $C_0(K,g)$, depending only on $K$ and $g$ 
such that 
$$
v_p(\det (\rho(\sigma_0)^{\nu}-E))\le C_0(K,g)
$$
for some $\sigma_0\in G_{K(\mu_{p^{\infty}})}$.
\end{claim}
Admitting this claim,  we can finish the proof of Proposition \ref{Main'}
immediately: 
It follows from Claim \ref{claim1}  and \eqref{ram1'} that 
$(T/p^nT)^{G_M}\subset  p^{n-C_0(K,g)}T/p^nT$ 
for $n>C_0(K,g)$.
Setting $C(K,g)_p:=p^{C_0(K,g)2g}$, we obtain 
$\sharp A(M)[p^n]
= \sharp (T/p^nT)^{G_M}
\le \sharp (T/p^{C_0(K,g)}T)
= C(K,g)_p$,
which shows 
$\sharp A(M)[p^{\infty}]
\le  C(K,g)_p$,
On the other hand,
we remark that Kubo and Taguchi showed in \cite[Lemma 2.3]{KT} that 
the residue field $\mbb{F}_{M}$ of $M$ is finite. 
The reduction map indues an injection 
from the prime-to-$p$ part of $A(M)$
into $\overline{A}(\mbb{F}_M)$
where $\overline{A}$ is the reduction of $A$.
If we denote by $q$ the order of $\mbb{F}_M$,
it follows from the Weil bound that 
$\sharp \overline{A}(\mbb{F}_M)\le (1+\sqrt{q})^{2g}$.
Therefore, setting $C(K,g):=C(K,g)_p \cdot (1+\sqrt{q})^{2g}$, 
we conclude that 
$\sharp A(M)_{\mrm{tors}}\le C(K,g)$. 
This finishes the proof of the proposition.

It suffices to show Claim \ref{claim1}.
Since the action of $G_K$ on $V$ 
factors through an abelian quotient of $G_K$,
it follows from the Schur's lemma that each Jordan H\"older factor of 
$V\otimes_{\mbb{Q}_p} \overline{\mbb{Q}}_p$ is of dimension one. 
Let $\psi_1,\dots ,\psi_{2g}\colon G_K\to \overline{\mbb{Q}}_p^{\times}$
be the characters associated with the Jordan H\"older factors of 
$V\otimes_{\mbb{Q}_p} \overline{\mbb{Q}}_p$. 
Since $K$ contains all $p$-adic fields 
of degree $\le 2g$, we know that each $\psi_i$ has values in $K^{\times}$
(in fact, for any $\sigma\in G_K$, we know that 
$\psi_1(\sigma),\dots ,\psi_{2g}(\sigma)$
are the roots of the polynomial $\mrm{det}(T-\sigma\mid V) \in \mbb{Q}_p[T]$
of degree $2g$). 
In the rest of the proof, 
we regard $\psi_i$ as a character $G_K\to K^{\times}$ of $G_K$ 
with values in  $K^{\times}$. 
We remark that each $\psi_i$ is a crystalline character
since $V$ is crystalline.
Furthermore, we have 
$$
v_p(\det (\rho(\sigma)^{\nu}-E))
=v_p\left(\prod^{2g}_{i=1}(\psi_i^{\nu}(\sigma)-1)\right)
=\sum^{2g}_{i=1}v_p(\psi_i^{\nu}(\sigma)-1)
$$
for any $\sigma\in G_{K(\mu_{p^{\infty}})}$.
Hence it follows from Lemma \ref{vplem} that 
we have 
\begin{align}
\label{ram2}
& \ \mrm{Min}\left\{ v_p(\det (\rho(\sigma)^{\nu}-E) \mid \sigma\in 
G_{K(\mu_{p^{\infty}})} \right\}  \notag
\\
\le & \   
\mrm{Min}\left\{ \sum^{2g}_{i=1} v_p(\psi_{i,K}^{\nu}(p\omega)^{-1}-1) 
\mid \omega \in \ker \mrm{Nr}_{K/\mbb{Q}_p} \right\}.
\end{align}
Note that we have 
\begin{align}
\label{vpcal}
\psi_{i,K}(p\omega)^{-1}
& =
\psi_{i,K}(\pi_K^{-e_K}\cdot \pi_K^{e_K} p^{-1})\cdot \psi_{i,K}(\omega)^{-1} 
\notag \\
& = 
\psi_{i,K}(\pi_K)^{-e_K} \psi_{i,\mrm{alg}}(\pi_K^{e_K} p^{-1})
\cdot \psi_{i,K}(\omega)^{-1} 
 \notag \\
& = 
\alpha_i^{-e_K} \cdot \psi_{i,\mrm{alg}}(p)^{-1} \cdot 
\psi_{i,K}(\omega)^{-1} 
\end{align}
for $\omega \in \ker \mrm{Nr}_{K/\mbb{Q}_p}$ 
where $\alpha_i:=\psi_{i,K}(\pi_K)\psi_{i,\mrm{alg}}(\pi_K)^{-1}$.
\begin{lemma}
\label{alpha}
Let the notation be as above.
Let $A^{\vee}$ be the dual abelian variety of $A$,
and let  $\overline{A}$ and $\overline{A^{\vee}}$ be the reductions of 
$A$ and $A^{\vee}$, respectively.

\noindent
{\rm (1)} $\alpha_i$ is a root of the characteristic polynomial 
of the geometric Frobenius endomorphism of $\overline{A}_{/\mbb{F}_K}$.

\noindent
{\rm (2)} $\alpha_i^{-1}q_K$ is a root of the characteristic polynomial 
of the geometric Frobenius endomorphism of  $\overline{A^{\vee}}_{/\mbb{F}_K}$.
\end{lemma}
\if0
\begin{proof}
By Proposition \ref{LA2}, we find that 
$\alpha_i$ is a root of 
$\mrm{det}(T-\varphi^{f_K}\mid D^K_{\mrm{cris}}(K(\psi_i^{-1})))$.
Since $K(\psi_i^{-1})$ is a subquotient of $V_p(A)^{\vee}\otimes_{\mbb{Q}_p} K$,
we find that $\alpha_i$ is also a root of 
$f(T):=\mrm{det}(T-\varphi^{f_K}\mid D^K_{\mrm{cris}}(V_p(A)^{\vee}))$.
It follows that 
\begin{align*}
f(T) 
& =\mrm{det}(T-\varphi^{f_K}\mid D^K_{\mrm{cris}}
(H^1_{\aet}(A_{\overline{K}},\mbb{Q}_p)) \\
& = \mrm{det}(T-\mrm{Frob}^{-1}_{K}\mid 
H^1_{\aet}(A_{\overline{K}},\mbb{Q}_{\ell}) 
= \mrm{det}(T-\mrm{Frob}_K\mid 
V_{\ell}(\overline{A}))
\end{align*}
for any prime $\ell\not=p$
where $\mrm{Frob}_K$ stands for the arithmetic Frobenius.
The last term above coincides 
with  the characteristic polynomial of 
the geometric Frobenius endomorphism of $\overline{A}_{/\mbb{F}_K}$. This shows (1).
On the other hand,  it follows from Proposition \ref{LA2} again that 
$\alpha_i^{-1}$ is  a root of 
$\mrm{det}(T-\varphi^{f_K}\mid D^K_{\mrm{cris}}(V_p(A)))$.
Since $ V_p(A)(-1)\simeq V_p(A^{\vee})^{\vee}$, we see that 
$\alpha_i^{-1}q_K$ is a root of 
$f^{\vee}(T):=\mrm{det}(T-\varphi^{f_K}\mid D^K_{\mrm{cris}}(V_p(A^{\vee})^{\vee}))$.
Now the same argument of the proof of (1) with replacing $A$ by $A^{\vee}$
gives a proof of  (2).
\end{proof}
\fi
\begin{proof}
Since $K(\psi_i^{-1})$ is a subquotient of $V_p(A)^{\vee}\otimes_{\mbb{Q}_p} K$,
it follows from  Proposition \ref{LA2} that  $\alpha_i$ is a root of the characteristic polynomial 
$f(T):=\mrm{det}(T-\varphi^{f_K}\mid D^K_{\mrm{cris}}(V_p(A)^{\vee}))$
of the $K_0$-linear endomorphism $\varphi^{f_K}$,
the $f_K$-th iterate of the Frobenius $\vphi$, 
on the $K_0$-vector space $D^K_{\mrm{cris}}(V_p(A)^{\vee})$.
We find  that 
\begin{align*}
f(T) 
& =\mrm{det}(T-\varphi^{f_K}\mid D^K_{\mrm{cris}}
(H^1_{\aet}(A_{\overline{K}},\mbb{Q}_p)) \\
& = \mrm{det}(T-\mrm{Frob}^{-1}_{K}\mid 
H^1_{\aet}(A_{\overline{K}},\mbb{Q}_{\ell}) 
= \mrm{det}(T-\mrm{Frob}_K\mid 
V_{\ell}(\overline{A}))
\end{align*}
for any prime $\ell\not=p$
where $\mrm{Frob}_K$ stands for the arithmetic Frobenius.
The second equality follows from Proposition \ref{char}.
The last term above coincides 
with  the characteristic polynomial of 
the geometric Frobenius endomorphism of $\overline{A}_{/\mbb{F}_K}$. This shows (1).
On the other hand,  it follows from Proposition \ref{LA2} again that 
$\alpha_i^{-1}$ is  a root of 
$\mrm{det}(T-\varphi^{f_K}\mid D^K_{\mrm{cris}}(V_p(A)))$.
Since $ V_p(A)(-1)\simeq V_p(A^{\vee})^{\vee}$, we see that 
$\alpha_i^{-1}q_K$ is a root of 
$f^{\vee}(T):=\mrm{det}(T-\varphi^{f_K}\mid D^K_{\mrm{cris}}(V_p(A^{\vee})^{\vee}))$.
Now the same argument of the proof of (1) with replacing $A$ by $A^{\vee}$
gives a proof of  (2).
\end{proof}

We continue the proof of Proposition \ref{Main'}.
Let 
$\mbf{h}_i=(h_{i,\sigma})_{\sigma\in \Gamma_K}$ 
be the Hodge-Tate type of $\psi_i$. 
Then we have  $h_{i,\sigma} \in \{0,1\}$ for any $i$ and $\sigma$.
We may suppose the following:
\begin{itemize}
\item[(I)] $\mbf{h}_i\not=(0)_{\sigma\in \Gamma_K},(1)_{\sigma\in \Gamma_K}$ 
for $1\le i\le r$, and 
\item[(II)] $\mbf{h}_i=(0)_{\sigma\in \Gamma_K}$ or $\mbf{h}_i=(1)_{\sigma\in \Gamma_K}$
for $r+1\le i\le 2g$.
\end{itemize}
Consider the case 
$\mbf{h}_i=(0)_{\sigma\in \Gamma_K}$.
If this is the case,  $\psi_i$ is unramified. This implies that 
$\psi_{i,\mrm{alg}}$ on ($\mbb{Q}_p$-points) is trivial. 
Take any $\omega \in \ker \mrm{Nr}_{K/\mbb{Q}_p}$
and consider the $p$-adic value $v_p(\psi_{i,K}^{\nu}(p\omega)^{-1}-1)$.
By \eqref{vpcal}, we have 
\begin{equation}
\label{HT0}
\psi_{i,K}^{\nu}(p\omega)^{-1}=\alpha_i^{-\nu e_K}.
\end{equation}
We remark that the right hand side is independent of the choice 
of $\omega \in \ker \mrm{Nr}_{K/\mbb{Q}_p}$
and $\alpha_i$ must be a $p$-adic unit (since so is the left hand side).
Next consider the case 
$\mbf{h}_i=(1)_{\sigma\in \Gamma_K}$. 
If this is the case, 
we have $\psi_i=\chi_p$ on $I_K$, 
that is, $\psi_{i,\mrm{alg}}$ (on $\mbb{Q}_p$-points) is $\mrm{Nr}_{K/\mbb{Q}_p}^{-1}$. 
Take any $\omega \in \ker \mrm{Nr}_{K/\mbb{Q}_p}$
and consider the $p$-adic value $v_p(\psi_{i,K}^{\nu}(p\omega)^{-1}-1)$.
By \eqref{vpcal}, we have 
\begin{equation}
\label{HT1}
\psi_{i,K}^{\nu}(p\omega)^{-1}=(\alpha_i^{-e_K}\cdot 
\mrm{Nr}_{K/\mbb{Q}_p}(p))^{\nu}=(\alpha_i^{-1}q_K)^{\nu e_K}.
\end{equation}
We remark that the last term is independent of the choice 
of $\omega \in \ker \mrm{Nr}_{K/\mbb{Q}_p}$.

Suppose $r+1\le i\le 2g$. 
Let $L$ be the unramified extension of $K$ of degree $\nu e_K$.  
Denote by $f_i(T)$ 
the characteristic polynomial of the Frobenius endomorphism of  
$\overline{A}_{/\mbb{F}_{L}}$ (resp.\ $\overline{A^{\vee}}_{/\mbb{F}_{L}}$) 
if $\mbf{h}_i=(0)_{\sigma\in \Gamma_K}$ 
(resp.\ $\mbf{h}_i=(1)_{\sigma\in \Gamma_K}$).
It follows from \eqref{HT0} (resp.\ \eqref{HT1}) and Lemma \ref{alpha} 
that $\psi_{i,K}^{\nu}(p\omega)$ (resp.\  $\psi_{i,K}^{\nu}(p\omega)^{-1}$) is a unit root of $f_i(T)$. 
Since $f_i(1)$ coincides with 
$\sharp \overline{A}(\mbb{F}_{q_{L}})$ 
(resp.\ $\sharp \overline{A^{\vee}}(\mbb{F}_{q_{L}})$), 
we find
$v_p(\psi_{i,K}^{\nu}(p\omega)^{-1}-1)\le v_p(f_i(1))$.
It follows from the Weil bound that 
$f_i(1)\le (1+\sqrt{q_L})^{2g}\le (1+\sqrt{p}^{\nu d_K})^{2g}$, which gives 
an inequality $v_p(f_i(1))\le \log_p (1+\sqrt{p}^{\nu d_K})^{2g}$. 
Therefore, setting $C_2(K,g):=\log_p (1+\sqrt{p}^{\nu d_K})^{2g}$, 
we obtain 
\begin{equation*}
v_p(\psi_{i,K}^{\nu}(p\omega)^{-1}-1)\le C_2(K,g) 
\end{equation*}
for $r+1\le i\le 2g$.

Suppose $1\le i\le r$. 
We define a subset $\mcal{R}=\mcal{R}(K,g)$ of $\overline{\mbb{Q}}_p$ 
by the set consisting of $\alpha\in \overline{\mbb{Q}}_p$
which is a root of a polynomial in $\mbb{Z}[T]$ of degree at most $2g$
and also is a $q_K$-Weil integer of weight $1$.
We also define 
$\mcal{R}'=\mcal{R}'(K,g):=
\{(\alpha^{-e_K}p^h)^{\nu} \mid \alpha\in \mcal{R}, 0< h< d_K \}$.
Then, both $\mcal{R}$ and  $\mcal{R}'$ are finite sets 
and depend only on $K$ and $g$.
Furthermore, Lemma \ref{alpha} and the Weil Conjecture imply 
that each $\alpha_i$ is
an element of $\mcal{R}$. 
Thus, setting 
$\gamma_i:=\alpha_i^{-e_K} \cdot \psi_{i,\mrm{alg}}(p)^{-1}
=\alpha_i^{-e_K} \cdot p^{\sum_{\sigma \in \Gamma_K}h_{i,\sigma}}$,
we have $\gamma_i^{\nu}\in \mcal{R}'$.
We consider the continuous character 
$\psi_{\mbf{h}_i}\colon \cO_K^{\times}\to \cO_K^{\times}$
defined in \eqref{psih}.
The character $\psi_{i,\mrm{alg}}$ (on $\mbb{Q}_p$-points) 
restricted to $\cO_K^{\times}$ coincides with $\psi_{\mbf{h}_i}$.
By Lemma \ref{infinite},  
there exists an element 
$\omega=\omega(K;\mbf{h}_1,\dots \mbf{h}_r)$ of $\ker \mrm{Nr}_{K/\mbb{Q}_p}$
such that $\psi_{\mbf{h}_1}^{\nu}(\omega),\dots ,\psi_{\mbf{h}_r}^{\nu}(\omega)$
 are of infinite order.
Since $\mcal{R}'$ is finite,
there exists 
an integer $r$
such that 
$\psi_{\mbf{h}_1}^{\nu}(\omega^r),\dots ,\psi_{\mbf{h}_r}^{\nu}(\omega^r)$
are not contained in  $\mcal{R}'$. 
Putting $\omega_0=\omega^r$, 
it holds that
\begin{itemize}
\item $\omega_0$ is an element of $\ker \mrm{Nr}_{K/\mbb{Q}_p}$.
Furthermore,  $\omega_0$ depends only on $K,g$ and $\mbf{h}_1,\dots ,\mbf{h}_r$, and  
\item $\psi_{\mbf{h}_1}^{\nu}(\omega_0),\dots ,\psi_{\mbf{h}_r}^{\nu}(\omega_0)$
are not contained in  $\mcal{R}'$. 
\end{itemize}
Now we define a constant
$C(K,g,\mbf{h}_1,\dots ,\mbf{h}_r)$
by 
$$
C(K,g,\mbf{h}_1,\dots ,\mbf{h}_r)
=\mrm{Max} \left\{ \sum^{r}_{i=1} v_p(\gamma_i'\psi_{\mbf{h}_i}^{\nu}(\omega_0)^{-1}-1) 
\mid \gamma_i'\in \mcal{R}'  \right\}.
$$
By construction of $\omega_0$, 
we see that the constant above is finite and depends only on $K,g,\mbf{h}_1,\dots ,\mbf{h}_r$.
We find that 
\begin{align}
\label{last}
& \mrm{Min}\left\{ \sum^{2g}_{i=1} v_p(\psi_{i,K}^{\nu}(p\omega)^{-1}-1) 
\mid \omega \in \ker \mrm{Nr}_{K/\mbb{Q}_p} \right\} \notag \\
\le &
\sum^{2g}_{i=1} v_p(\psi_{i,K}^{\nu}(p\omega_0)^{-1}-1) =
\sum^{r}_{i=1} v_p(\gamma_i^{\nu}\psi_{\mbf{h}_i}^{\nu}(\omega_0)^{-1}-1)  
+ \sum^{2g}_{i=r+1} v_p(\psi_{i,K}^{\nu}(p\omega_0)^{-1}-1) \notag \\
\le & C(K,g,\mbf{h}_1,\dots ,\mbf{h}_r)+(2g-r)C_2(K,g) \le C_0(K,g).
\end{align}
Here, 
\begin{equation*}
C_0(K,g):=\mrm{Max}\left\{  C(K,g,\mbf{h}_1,\dots ,\mbf{h}_r)+(2g-r)C_2(K,g) 
\mid 0\le r\le 2g,\  \mbf{h}_1,\dots ,\mbf{h}_r:\mbox{Case (I)} \right\}
\end{equation*}
(if $r=0$, we consider the constant $C(K,g,\mbf{h}_1,\dots ,\mbf{h}_r)$ as zero).
By construction, 
the constant $C_0(K,g)$ is finite and depends only on $K$ and $g$. 
By \eqref{ram2} and \eqref{last}, we conclude that $C_0(K,g)$ defined here 
satisfies the desired property 
of Claim \ref{claim1}. 
This is the end of the proof of Proposition \ref{Main'}.
\end{proof}

We end this paper with the following remarks.

\begin{remark}
(1) We do not know the explicit description of 
the bound $C(K,g)$ in Theorem \ref{Main}.  
 
\noindent
(2) We do not know whether we can remove the sentence "with complex multiplication"
from the statement of Theorem \ref{Main} or not.

\noindent
(3) Let $K$ be a $p$-adic field.  
Let $\pi=\pi_0$ be a uniformizer of $K$ and $\pi_n$ a $p^n$-th root of 
$\pi$ such that $\pi_{n+1}^p=\pi_n$ for any $n\ge 0$.
We set $K_{\infty}:=K(\pi_n\mid n\ge 0)$. 
The field $K_{\infty}$ is clearly a subfield of 
$K(\sqrt[p^{\infty}]{K})$.
It is well-known that  $K_{\infty}$ is one of key ingredients 
in (integral) $p$-adic Hodge theory
since $K_{\infty}$ is familiar to  the theory of norm fields. 
We can check the equality 
$$
A(K_{\infty})_{\mrm{tors}}=A(K)_{\mrm{tors}}
$$
holds for any abelain variety $A$ over $K$ with good reduction.
(We do not need CM assumption here.)
The proof is as follows: 
It  follows from the criterion of N\'eron-Ogg-Shafarevich \cite[Theorem 1]{ST} 
that the inertia subgroup $I_K$ of $G_K$ acts trivially 
on the prime-to-$p$ part  of $A(\overline{K})_{\mrm{tors}}$.
Since $K_{\infty}$ is totally ramified over $K$, 
we obtain the fact that the prime-to-$p$ parts of 
$A(K)_{\mrm{tors}}$ and $A(K_{\infty})_{\mrm{tors}}$ coincide with each other.
On the other hand,  we consider the following natural maps.
$$
A(K)[p^n]
\simeq 
\mrm{Hom}_{G_K}(\mbb{Z}/p^n\mbb{Z}, A(\overline{K})[p^n])
\overset{\iota}{\hookrightarrow} 
\mrm{Hom}_{G_{K_{\infty}}}(\mbb{Z}/p^n\mbb{Z}, A(\overline{K})[p^n])
\simeq 
A(K_{\infty})[p^n]
$$
Since $A$ has good reduction, the injection $\iota$ above is bijective
(cf.\ \cite[Theorem 3.4.3]{Br} for $p>2$; \cite{Ki}, \cite{La}, \cite{Li} for $p=2$).
This implies 
$
A(K_{\infty})[p^{\infty}]=A(K)[p^{\infty}].
$

\noindent
(4) It follows immediately from (3), the Raynaud's criterion of semistable reduction 
and  the main theorem of \cite{CX} 
that there exists an explicitly calculated constant $C$,
depending only on $K$ and $g$, such that we have 
$$
\sharp A(K_{\infty})_{\mrm{tors}}<C
$$
for any abelain variety $A$ over $K$ with potential good reduction.
(We do not need CM assumption here.)
We leave the readers to give the explicit description of $C$ above.
\end{remark}

\end{document}